\providecommand{\U}[1]{\protect\rule{.1in}{.1in}}
\numberwithin{equation}{section}
\numberwithin{figure}{section}
\theoremstyle{plain}
\newtheorem{thm}{\protect\theoremname}
\theoremstyle{plain}
\newtheorem{prop}[thm]{\protect\propositionname}
\theoremstyle{plain}
\newtheorem{lem}[thm]{\protect\lemmaname}
\theoremstyle{plain}
\newtheorem{cor}[thm]{\protect\corollaryname}
\theoremstyle{remark}
\newtheorem*{rem*}{\protect\remarkname}
  \providecommand{\corollaryname}{Corollary}
  \providecommand{\lemmaname}{Lemma}
  \providecommand{\propositionname}{Proposition}
  \providecommand{\remarkname}{Remark}
\providecommand{\theoremname}{Theorem}
\begin{document}
\title{On solutions to Cournot-Nash equilibria equations on the sphere}
\author{Micah Warren}
\thanks{I'm supported in part by  NSF DMS-0901644 and NSF DMS-1161498.  Thanks also to my representative Rush Holt for standing up to defend scientific funding.    }
\address{Department of Mathematics, Princeton University, Princeton NJ, USA}
\email{mww@math.princeton.edu, micahw@uoregon.edu}
\maketitle

\section{Introduction}

In this note, we discuss equations associated to Cournot-Nash Equilibria as
put forward by Blanchet and Carlier \cite{BC}. These equations are related to
an optimal transport problem in which the source measure is known but the
target measure is to be determined. A Cournot-Nash Equilibrium is a special
type of optimal transport: Each individual $x$ is transported to a point
$T(x)$ in a way that not only minimizes the total cost of transportation, but
minimizes a cost to the indivual $x$ (transportation plus other). \ This
latter cost may depend on the target distribution, and may involve congestion,
isolation and geographical terms.

Blanchet and Carlier demonstrated how CNE are related to nonlinear elliptic
PDEs, explicitly deriving a Euclidean version of the equation \cite[eq
4.6]{BC} and showing \cite[Theorem 3.8]{BC} that this problem has some very
nice properties. The fully nonlinear Monge-Amp\`ere equation differs from
`standard' optimal transport equations in that the potential itself occurs on
the right hand side, along with possibly some nonlocal terms. Here we study
the problem on the sphere. \ Immediately one can conclude from \cite[Theorem
3.8]{BC} and Loeper's \cite{MR2506751} results that that optimal maps are
continuous with control on the H\"older norm. We move this a step further and
show that all derivative norms can be controlled in terms of the data, when
the solution is smooth. \ When the solution is known to be differentiable
enough, then one can easily adapt Ma-Trudinger-Wang's \cite{MR2188047}
estimates. To make the conclusion a priori, we must use the continuity method.
Closedness follows Ma-Trudinger-Wang's estimates, but openness is not
immediate and requires some conditions. \ In Theorem \ref{thm:main} we give
some conditions on the data so that the problem can be solved smoothly. \ 

\section{Background and setup}

\ In this section we briefly recap the setup in \cite{BC}. \ Given a space of
player types $X$, endowed with a probability measure $\mu$, an action space
$Y,$ \ and a cost function
\[
\Phi:X\times Y\times\mathcal{P}(Y)\rightarrow\mathbb{R},
\]
$x$-type agents pay cost $\Phi(x,y,\nu)$ \ to take action $y$. Here $\nu
\in\mathcal{P}(Y)$ is the probability measure in the action space which is the
push forward of $\mu$ via by the map of actions from $X$ to $Y.$ Supposing
that $x$-type agents know the distribution $\nu,$ they can choose the best
action $y.$ \ A Cournot-Nash Equilibrium is a joint probability distribution
measure $\gamma\in$ $\mathcal{P}(X\times Y)$ with first marginal $\mu$ such
that
\begin{equation}
\gamma\left\{  (x,y)\in X\times Y:\Phi(x,y,\nu)=\min_{z\in Y}\Phi
(x,z,\nu)\right\}  =1\label{CNEcondition}%
\end{equation}
where $\nu$ is the second marginal. \ 

We will be interested in a particular type of cost
\[
\Phi(x,y,\nu)=c(x,y)+\mathcal{V}[\nu](y)
\]
where $c$ is the transportation cost. Blanchet and Carlier show \cite[Lemma
2.2]{BC} that a CNE will necessarily be an optimal transport pairing for the
cost $c$ between the measures $\mu$ and $\nu.$ They further show that if
$\mathcal{V}[\nu]$ is the differential of a functional $\mathcal{E}[\nu]$,
then at a minimizer for $\mathcal{E}[\nu]+\mathcal{W}_{c}(\mu,\nu),$ the
optimal transport will necessarily be a CNE. (Here, $\mathcal{W}_{c}(\mu,\nu)$
is Wasserstein distance.) In particular, if the cost $\mathcal{V}_{m}[\nu]$ is
of the form
\begin{equation}
\mathcal{V}_{m}[\nu](y)=f\left( \frac{d\nu}{dm}(y)\right) +\int\phi
(y,z)d\nu(z)+V(y)\label{VVV}%
\end{equation}
where $m$ is a `background' measure and the function $\phi(y,z)$ is symmetric
on $Y\times Y$, then $\mathcal{V}_{m}$ is a differential and solution to the
optimal transport is a CNE. (We will be licentious with notation, letting
$\nu$ denote not only the measure, but also the density with respect to the
background $m.$) From here on out we suppose we are working with a 
solution to an optimal transport with cost $c$ between measures $\mu$ and
$\nu$ which is also a CNE for a total cost $\Phi.$ We also assume that the manifolds $X$ and $Y$ are compact without boundary.  

One can consider the pair $(u,u^{\ast})$ which maximizes the Kantorovich
functional
\[
J(u,v)=\int-ud\mu+\int vd\nu
\]
over all $-u(x)+v(y)\leq\Phi(x,y).$ The pair $(u,u^{\ast})$ will satisfy
\begin{equation}
-u(x)+u^{\ast}(y)=\Phi(x,y)\label{eq:Kantorovich equality}%
\end{equation}
$\gamma$- almost everywhere, where $\gamma$ is the optimal measure for the
Kantorovich problem. If the cost satisfies the standard Spence-Mirrlees
condition (in the mathematics literature, the ``twist'' or (A1) condition
(c.f. \cite[section 2]{MR2188047})), we have $\mu$-almost everywhere
\begin{equation}
-u(x)+u^{\ast}(T(x))=\Phi(x,T(x)).\label{eq:MK equality}%
\end{equation}
The twist condition says that $T(x)$ is uniquely determined by
\begin{equation}
T(x)=\{y:Du(x)+Dc(x,y)=0\} ,\label{follows from twist-1}%
\end{equation}
which gives the identity
\begin{equation}
Du(x)+Dc(x,T(x))=0.\label{eq:twist identity}%
\end{equation}
Note that fixing an $x,$ the quantity
\[
\Phi(x,y)-u^{\ast}(y)
\]
must have a minimum at $T(x),$ we conclude that
\[
D_{y}\Phi(x,T(x))=Du^{\ast}(T(x)).
\]
But then we bring in the condition (\ref{CNEcondition}) that, fixing $x,$
\[
\Phi(x,T(x))\leq\Phi(x,y)
\]
which implies that 
\[
D_{y}\Phi(x,T(x))=0
\]
from which we conclude that
\[
Du^{\ast}(y)\equiv0.
\]
Now the pair $(u,u^{\ast})\ $is determined up to a constant. One can choose
the constant in $u$ or $u^{\ast}$ but not both. \ At this point we  simply
choose $u^{\ast}=0.$ Having fixed this choice allows us to read information about $u$ and
the measure $\nu,$ using (\ref{VVV}) and (\ref{eq:MK equality})
\[
-u(x)=c(x,T(x))+f(\nu(T(x)))+\int\phi(T(x),z)d\nu(z)+V(T(x)).
\]
In particular, the density $\nu(y)$ must be determined by
\begin{equation}
\nu(T(x))=f^{-1}\left\{  -u(x)-c(x,T(x))-\int\phi(T(x),T(z))d\mu(z)
(z)-V(T(x))\right\}  \label{eq:determines nu}%
\end{equation} 
having used the change of integration variables $T$ between $\mu$ and $\nu.$   
The optimal transportation equation (c.f. \cite{MR2188047}) becomes
\begin{equation}
\frac{\det(u_{ij}(x)+c_{ij}(x,T(x)))}{\det\left( -c_{is}(x,T(x))\right)
}=\frac{\mu(x)}{f^{-1}\left\{  Q(x,u)\right\}  }.\label{the equation}%
\end{equation}
Here and in the sequel, we use $i,j,k$ to denote derivatives in the source
$X$, and $p,s,t $ to denote derivatives in the target $Y$. \ It will
convenient to assume that $c_{is}$ is negative definite, which follows if we
are assuming the A2 condition (see \cite{MR2188047}) and have chosen an
appropriate coordinate system). We will use $b_{is}(x)=-c_{is}(x,T(x)).$
\ Also (to keep equations within one line) we abbreviate
\[
Q(x,u)=-u(x)-c(x,T(x))-\int\phi(T(x),T(z))d\mu(z)-V(T(x))
\]
with $T(x)$ being determined by (\ref{follows from twist-1}).

Before we say how this fully nonlinear equation is vulnerable, we
mention the ``Inada-like'' conditions \cite[Section 3.3]{BC} :
\begin{align}
\lim_{\nu\rightarrow0^{+}}f(\nu)  &  =-\infty\quad\mbox{and}\quad\lim
_{\nu\rightarrow+\infty}f(\nu)=+\infty\label{eq:inada}\\
&  f^{\prime}>0\quad\mbox{and}\quad f\in C^{2}(\mathbb{\mathbb{R}}%
^{+}).\label{eq:inada2}%
\end{align}
If $f$ satisfies these conditions, then several observations are in order.
\ First as noted in \cite[Theorem 3.8]{BC} on a compact manifold we get bounds
away from zero and infinity for the density $\nu.$ \ In the spherical distance
squared transportation cost case, this immediately gives $C^{\alpha}$
continuity of the map by results of Loeper. \ \ Secondly, the right hand side
of the equation (\ref{the equation}) is strictly monotone in the zeroth order
term - this is crucial in obtaining existence and uniqueness results, as it
will allow us to invert the linearized operator. \ \ Finally, as we will show
below, the first derivatives of this density will be bounded in terms of an a
priori constant (depending on smoothness of $f$) \ and the second derivatives
will be bounded by a constant times second derivatives of $u.$ \ These
estimates will allow us to take advantage of the Ma-Trudinger-Wang estimates.

We will show an estimate on smooth solutions: \ If a solution to
(\ref{the equation}) is $C^{4}$, then it enjoys estimates of all orders
subject to universal bounds. In order to show that arbitrary solutions are
$C^{4}$ and hence smooth, we must use a continuity method. This method relies
on a linearization which requires some discussion, given the integral terms in
the equation. \ \ 

The problem here, on a compact manifold, with cost function satisfying the
Ma-Trudinger-Wang condition, is quite a bit simpler than the more delicate
boundary value problem mentioned in \cite{BC}. With or without the nonlocal
terms, such a problem may be approached as in \cite{MR2644782}. We leave this
problem aside for now.

\section{Linearization}

We take the natural log of (\ref{the equation}) and then consider the
functional
\begin{alignat}{1}
\begin{aligned}F(x,u,Du,D^{2}u) & =\ln\det(\left(u_{ij}(x)+c_{ij}(x,T(x))\right)-\ln\det\left(b{}_{is}(x,T(x))\right)\end{aligned}\label{eq:definition of F}%
\\
-\ln\mu(x)+\ln f^{-1}\left( Q(x,u))\right) \nonumber
\end{alignat}
and the equation we want to solve is
\begin{equation}
F(x,u,Du,D^{2}u)=0.\label{lnequation}%
\end{equation}
\ Preparing for linearization, consider (\ref{eq:twist identity}) applied to
$u+tv$ :
\[
Du(x)+tD\eta(x)+Dc(x,T_{t}(x))=0.
\]
Differentiate with respect to $t$ and get that
\[
D\eta(x)=b_{is}(x,T(x))\frac{dT^{s}}{dt}.
\]
Linearizing,
\begin{align}
L\eta &  =\frac{d}{dt}F(u+t\eta)=w^{ij}\eta_{ij}+w^{ij}c_{ijs}b^{sk}\eta
_{k}+b^{is}c_{isp}b^{pk}\eta_{k}\label{eq:L0}\\
+  &  \frac{\left( f^{-1}(Q)\right) ^{\prime}}{f^{-1}\left( Q\right) }\left\{
\begin{array}
[c]{c}%
-c_{s}(x,T(x))b^{sk}\eta_{k}-\eta-V_{s}b^{sk}\eta_{k}\\
-b^{sk}\eta_{k}(x)\int\phi_{s}(T(x),T(z))d\mu(z)-\int\phi_{\bar{s}%
}(T(x),T(z))b^{sk}(z)\eta_{k}(z)d\mu(z)
\end{array}
\right\} .\label{eq:L1}%
\end{align}
Here we are using
\[
w_{ij}(x)=u_{ij}(x)+c_{ij}(x,T(x)).
\]
We note also that differentiating (\ref{eq:twist identity}) shows
\begin{equation}
T_{i}^{s}(x,T(x))=\frac{\partial T^{s}}{\partial x_{i}}=b^{sk}(x,T(x))w_{ki}%
(x,T(x)).\label{nine}%
\end{equation}
Splitting (\ref{eq:L0}) and (\ref{eq:L1}) for convenience we write,
respectively,
\[
L\eta=L^{0}\eta+L^{1}\eta.
\]
We take $g_{ij}(x)=w_{ij}(x)$ to define a metric (one can check that it
transforms as such), then write
\begin{equation}
d\mu(x)=e^{-a(x)}dV_{g}(x)
\end{equation}
where
\[
-a(x)=\ln\mu(x)-\frac{1}{2}\ln\det w_{ij}(x).
\]
From the definition of $F$ (\ref{eq:definition of F}) we have
\[
-a(x)=\frac{1}{2}\ln\det w_{ij}-\ln\det b+\ln\nu-F,
\]
having introduced
\[
\nu(x)=\ln f^{-1}\left( Q(x,u))\right) .
\]
First, we compute the Bakry-Emery Laplace
\[
\triangle_{a}\eta=\bigtriangleup_{g}\eta-\nabla a\cdot\nabla\eta.
\]
We begin with $\bigtriangleup_{g}\eta$ differentiating in some coordinate
system (see very similar computations preceding  \ref{pg10}):
\[
\frac{1}{\sqrt{\det w}}\left( \sqrt{\det w}w^{ij}\eta_{j}\right) _{i}%
=w^{ij}\eta_{ij}+\frac{1}{2}w^{ab}\partial_{i}w_{ab}w^{ij}\eta_{j}%
-w^{ia}w^{bj}\partial_{i}w_{ab}\eta_{j}
\]
\begin{align*}
&  =w^{ij}\eta_{ij}+w^{ab}w^{ij}\left( \partial{}_{i}w_{ab}-\partial_{b}%
w_{ia}\right) \eta_{j}-\frac{1}{2}w^{ab}\partial_{i}w_{ab}w^{ij}\eta_{j}\\
&  =w^{ij}\eta_{ij}+\left( w^{ba}c_{abs}b^{sj}-w^{ij}c_{isk}b^{sk}\right)
\eta_{j}-\frac{1}{2}w^{ij}\left( \ln\det w\right) _{i}\eta_{j}%
\end{align*}
\[
=L^{0}\eta-b^{is}c_{isp}b^{pk}\eta_{k}-w^{ij}c_{kis}b^{sk}\eta_{j}-\frac{1}%
{2}w^{ij}\left( \ln\det w\right) _{i}\eta_{j}.
\]
Thus
\begin{align*}
\triangle_{a}\eta &  =L^{0}\eta-b^{is}c_{isp}b^{pk}\eta_{k}-w^{ij}%
c_{kis}b^{sk}\eta_{j}-\frac{1}{2}w^{ij}\left( \ln\det w\right) _{i}\eta_{j}\\
&  +\frac{1}{2}w^{ij}\left( \ln\det w\right) _{i}\eta_{j}-w^{ij}\left( \ln\det
b\right) _{i}\eta_{j}+\left( \ln\nu\right) _{i}w^{ij}\eta_{j}-F_{i}w^{ij}%
\eta_{j}\\
&  =L^{0}v+\left( \ln\nu\right) _{i}w^{ij}\eta_{j}-F_{i}w^{ij}\eta_{j},
\end{align*}
and hence
\[
L\eta=\triangle_{a}\eta+L^{1}\eta-(\ln \nu)_{i}w^{ij}\eta_{j}+F_{i}w^{ij}%
\eta_{j}.
\]
Next, we compute
\[
(\ln \nu)_{i}=\frac{\left( f^{-1}(Q)\right) ^{\prime}}{f^{-1}\left( Q\right)
}\left\{
\begin{array}
[c]{c}%
-u_{i}(x)-c_{i}(x,T(x))-c_{s}(x,T(x))b^{sk}w_{ki}\\
-b^{sk}w_{ki}\int\phi_{s}(T(x),T(z))d\mu(z)-V_{s}b^{sk}w_{ki}%
\end{array}
\right\}  .
\]
Noting that $-u_{i}(x)-c_{i}(x,T(x))=0,$ and the expression (\ref{eq:L1}) we
have
\begin{align*}
L^{1}\eta-(\ln \nu)_{i}w^{ij}\eta_{j}  &  =\\
&  \frac{\left( f^{-1}(Q)\right) ^{\prime}}{f^{-1}\left( Q\right) }\left\{
-\eta-\int\phi_{{s}}(T(x),T(z))b^{sk}(z)\eta_{k}(z)d\mu(z)\right\}  .
\end{align*}
Next, we compute the integral term in the previous expression: Notice
\[
\int\left\langle {\nabla}\phi(y,T(z)),\nabla \eta \right\rangle e^{-a(z)}%
dV_{g}(z)=\int\phi_{{s}}(y,T(z))b^{sk}w_{ki}\eta_{j}w^{ij}e^{-a(x)}dV_{g}
\]
\[
=\int\phi_{{s}}(T(x),T(z))b^{sk}\eta_{k}(z)d\mu(z).
\]
Now, integrating by parts, we have that
\[
-\int\phi_{{s}}(T(x),T(z))b^{sk}\eta_{k}(z)d\mu(z)=\int\phi
(T(x),T(z))\bigtriangleup_{a}\eta(z)e^{-a(z)}dV_{g}(z).
\]
Combining, we have
\begin{equation}
L\eta=\bigtriangleup_{a}\eta-h(x)\eta(x)-h(x)\int\phi(T(x),T(z))\bigtriangleup
_{a}\eta(z)d\mu(z)+\left\langle \nabla F,\nabla\eta\right\rangle
,\label{full linearized operator}%
\end{equation}
using the shorthand
\[
h(x,u)=\frac{\left( f^{-1}(Q)\right) ^{\prime}}{f^{-1}\left( Q\right) }.
\]
Note here that if $f\:$satisfies (\ref{eq:inada}),(\ref{eq:inada2}) then
$h(x,u)$ will be a positive differentiable quantity. In particular, if $f(\tau)=ln(\tau)$
then $h$ will be identically $1.$ When $F\equiv0$ we have the following.

\begin{prop}
At a solution of (\ref{lnequation}), the linearized operator takes the form
\begin{equation}
L\eta=\bigtriangleup_{a}\eta-h(x)\eta(x)-h(x)\int\phi(T(x),T(z))\bigtriangleup
_{a}\eta(z)d\mu(z).\label{definition of linearized operator at a solution}%
\end{equation}

\end{prop}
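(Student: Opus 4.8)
The plan is to read the statement straight off the linearization formula (\ref{full linearized operator}), which the computation preceding the proposition establishes for the linearization $L$ of $F$ at an \emph{arbitrary} admissible $u$; the only extra ingredient is that $u$ is an actual solution of (\ref{lnequation}). Introduce the composite scalar $\mathcal{F}(x):=F(x,u(x),Du(x),D^{2}u(x))$ on $X$. When $u$ solves (\ref{lnequation}) this function is identically zero, so all of its tangential derivatives vanish, i.e. $F_{i}\equiv0$ in the notation used above, and hence $\langle\nabla F,\nabla\eta\rangle=F_{i}w^{ij}\eta_{j}\equiv0$. Dropping this one term from (\ref{full linearized operator}) produces exactly (\ref{definition of linearized operator at a solution}).

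Since the substance is (\ref{full linearized operator}) itself, I would make the argument self-contained by recording the chain of reductions that yields it: (i) check that $g_{ij}:=w_{ij}$ transforms as a metric, and fix $-a=\ln\mu-\tfrac12\ln\det w_{ij}$ so that $d\mu=e^{-a}\,dV_{g}$; (ii) expand $\triangle_{g}\eta=(\det w)^{-1/2}\,\partial_{i}\!\big(\sqrt{\det w}\,w^{ij}\eta_{j}\big)$ in a coordinate chart, rewriting the first-order terms via the chain rule applied to $c_{ij}(x,T(x))$ together with (\ref{nine}); (iii) form the Bakry--\'{E}mery Laplacian $\triangle_{a}\eta=\triangle_{g}\eta-\nabla a\cdot\nabla\eta$ and substitute the alternative expression for $-a$ coming from (\ref{eq:definition of F}), namely $-a=\tfrac12\ln\det w-\ln\det b+\ln\nu-F$, so the $\ln\det w$ contributions cancel and one is left with $\triangle_{a}\eta=L^{0}\eta+(\ln\nu)_{i}w^{ij}\eta_{j}-F_{i}w^{ij}\eta_{j}$; (iv) compute $(\ln\nu)_{i}$ from $\nu=\ln f^{-1}(Q)$ and invoke the twist identity (\ref{eq:twist identity}) in the form $-u_{i}-c_{i}(x,T(x))=0$ to see that $L^{1}\eta-(\ln\nu)_{i}w^{ij}\eta_{j}$ collapses to $-h\eta-h\int\phi_{\bar s}(T(x),T(z))\,b^{sk}(z)\eta_{k}(z)\,d\mu(z)$ with $h=(f^{-1}(Q))'/f^{-1}(Q)$; (v) using the chain rule and the symmetry of $\phi$, rewrite the nonlocal first-order term as $\int\langle\nabla_{z}\phi(T(x),T(z)),\nabla\eta\rangle\,e^{-a}\,dV_{g}$ and integrate by parts against the weighted volume $d\mu=e^{-a}\,dV_{g}$, with respect to which $\triangle_{a}$ is self-adjoint, to turn it into $-\int\phi(T(x),T(z))\,\triangle_{a}\eta(z)\,d\mu(z)$. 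Assembling the pieces gives (\ref{full linearized operator}), and the Inada conditions (\ref{eq:inada})--(\ref{eq:inada2}) guarantee $h>0$ and $h\in C^{1}$ (with $h\equiv1$ when $f=\ln$).

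The one step that demands genuine care — the place I expect the real work, as opposed to bookkeeping, to lie — is (v): the integration by parts that converts the nonlocal \emph{first-order} term into a nonlocal term in $\triangle_{a}\eta$. It is precisely there that the symmetry of $\phi$ and the specific choice of the weight $e^{-a}$ (which packages both $\mu$ and $\det w$) are both used, and it is this manipulation that gives the linearized operator its self-adjoint-looking shape. Once (\ref{full linearized operator}) is in hand the proposition follows immediately, since $\mathcal{F}\equiv0$ at a solution forces $\nabla F\equiv0$.
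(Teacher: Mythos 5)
Your proposal is correct and follows the paper's own route: the displayed formula (\ref{full linearized operator}) is derived for general $u$ by exactly the reductions you list, and the proposition is then obtained, as you do, by observing that $F\equiv 0$ at a solution of (\ref{lnequation}) kills the $\left\langle \nabla F,\nabla\eta\right\rangle$ term. Your recap of steps (i)--(v), including the integration by parts against $d\mu=e^{-a}dV_{g}$ in step (v), matches the computation preceding the proposition in the paper.
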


\begin{lem}
\label{lem:invertible L}Suppose that
\begin{equation}
\max_{\left( x,y\right) \in X\times Y}h(x,u)|\phi
(x,y)|<1.\label{eq:condition for invert}%
\end{equation}
Then the operator (\ref{definition of linearized operator at a solution}%
)\textup{ }has trivial kernel.
\end{lem}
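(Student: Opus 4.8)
The plan is an energy/Fredholm‑type argument. Suppose $\eta$ lies in the kernel of (\ref{definition of linearized operator at a solution}) and set $\psi:=\bigtriangleup_{a}\eta$. Since $h>0$ by the Inada conditions (\ref{eq:inada}), (\ref{eq:inada2}), we may divide $L\eta=0$ through by $h$ and rewrite it as
\[
\frac{\psi(x)}{h(x)}=\eta(x)+\int\phi(T(x),T(z))\,\psi(z)\,d\mu(z).
\]
Write $S\psi(x):=\int\phi(T(x),T(z))\psi(z)\,d\mu(z)$; because $\phi$ is symmetric on $Y\times Y$ and $\mu$ is a probability measure, $S$ is a bounded, self‑adjoint operator on $L^{2}(d\mu)$. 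I would then pair the displayed identity with $\psi$ in $L^{2}(d\mu)$, obtaining
\[
\int\frac{\psi^{2}}{h}\,d\mu=\int\eta\,\bigtriangleup_{a}\eta\,d\mu+\int\psi\,S\psi\,d\mu .
\]

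For the first term on the right I would integrate by parts: the Bakry--Emery Laplacian is self‑adjoint for $d\mu=e^{-a}dV_{g}$, so $\int\eta\,\bigtriangleup_{a}\eta\,d\mu=-\int|\nabla\eta|_{g}^{2}\,d\mu\le 0$. It is essential to pair against $\psi/h$ rather than against $\eta$: the latter choice would force an integration by parts inside the nonlocal term and bring in first derivatives of $\phi$, which are \emph{not} controlled by the hypothesis (\ref{eq:condition for invert}).

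The main step is the estimate of $\int\psi\,S\psi\,d\mu=\int\!\!\int\phi(T(x),T(z))\,\psi(x)\psi(z)\,d\mu(x)\,d\mu(z)$. Writing $\psi=h^{1/2}\chi$ with $\chi=h^{-1/2}\psi$, the kernel becomes $\phi(T(x),T(z))\,h(x)^{1/2}h(z)^{1/2}$; since $h(x)^{1/2}h(z)^{1/2}\le\max\{h(x),h(z)\}$ and $\phi(T(x),T(z))=\phi(T(z),T(x))$, the hypothesis (\ref{eq:condition for invert}) gives $|\phi(T(x),T(z))|\,h(x)^{1/2}h(z)^{1/2}\le\kappa$ with $\kappa:=\max_{X\times Y}h|\phi|<1$. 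Then, using Cauchy--Schwarz and $\mu(X)=1$,
\[
\Bigl|\int\psi\,S\psi\,d\mu\Bigr|\le\kappa\Bigl(\int|\chi|\,d\mu\Bigr)^{2}\le\kappa\int\chi^{2}\,d\mu=\kappa\int\frac{\psi^{2}}{h}\,d\mu .
\]
Combining the three displays yields $(1-\kappa)\int\psi^{2}/h\,d\mu\le 0$, whence $\psi\equiv 0$, i.e.\ $\bigtriangleup_{a}\eta\equiv 0$; then $\int|\nabla\eta|_{g}^{2}\,d\mu=0$ so $\eta$ is constant, and substituting a constant into (\ref{definition of linearized operator at a solution}) leaves $-h\eta\equiv 0$, forcing $\eta\equiv 0$.

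I expect the only real obstacle to be bookkeeping rather than analysis: one must match the normalization in (\ref{eq:condition for invert}) with the kernel $h(x)\phi(T(x),T(z))$ that actually appears in $L$ (the asymmetry that $h$ is evaluated only at $x$ is absorbed precisely through $h(x)^{1/2}h(z)^{1/2}\le\max\{h(x),h(z)\}$ together with the symmetry of $\phi$), and one must record that $S$ is self‑adjoint on $L^{2}(d\mu)$. Once the kernel bound $\le\kappa<1$ is in hand, the conclusion is a routine energy estimate for $\bigtriangleup_{a}-h$ perturbed by a small self‑adjoint nonlocal operator.
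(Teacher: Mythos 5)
Your argument is correct, and at heart it is the same $L^{2}$ energy argument as the paper's, but you run it more directly and, in one respect, more sharply. The paper factors $L=(I-hJ)A-h$, proves $\Vert hJ\Vert<1$ so that $I-hJ$ is invertible, and introduces an auxiliary $\omega$ with $(I-hJ)\omega=h\eta$; since $A\eta=(I-hJ)^{-1}h\eta$ on the kernel, that $\omega$ is exactly your $\psi=\bigtriangleup_{a}\eta$, and the paper's relation $\omega/h=\eta+J\omega$ paired against $\omega$ is precisely your identity $\int\psi^{2}/h\,d\mu=\int\eta\,\bigtriangleup_{a}\eta\,d\mu+\int\psi S\psi\,d\mu$. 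The divergence is in the absorption step: the paper decouples, bounding $\langle\omega/h,\omega\rangle\geq\Vert\omega\Vert^{2}/\max h$ and $\langle J\omega,\omega\rangle\leq\Vert J\Vert\,\Vert\omega\Vert^{2}$, so what it really uses is $\max h\cdot\Vert J\Vert<1$ (hence $\max h\cdot\max|\phi|<1$), whereas your symmetrization $h(x)^{1/2}h(z)^{1/2}\leq\max\{h(x),h(z)\}$ combined with the symmetry of $\phi$ exploits the joint bound in (\ref{eq:condition for invert}) exactly as stated, and your route never needs the Neumann-series invertibility of $I-hJ$, only positivity of $h$ and the self-adjointness of $\bigtriangleup_{a}$ with respect to $d\mu=e^{-a}dV_{g}$ (which the paper also uses). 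Two further small merits of your write-up: the final step disposes of the constant case explicitly (the paper's strict inequality $-\int|\nabla\eta|^{2}d\mu<0$ tacitly assumes $\eta$ nonconstant), and your Cauchy--Schwarz step $(\int|\chi|\,d\mu)^{2}\leq\int\chi^{2}d\mu$ correctly records that it relies on $\mu$ being a probability measure.
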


\begin{proof}
To make use of some functional analytic formality, we define operators $A,J,h$
and $I$ on the space
\[
\mathcal{{B}}=L^{2}(X,d\mu)
\]
via
\begin{align*}
[A\eta](x)  &  =\bigtriangleup_{a}\eta(x),\\
{}[J\eta](x)  &  =\int\phi(T(x),T(z))\eta(z)d\mu(z),\\
{}[h\eta](x)  &  =h(x)\eta(x)\\
{}[I\eta](x)  &  =\eta(x).
\end{align*}
Then
\[
L=A-h-hJA=\left( I-hJ\right) A-h=(I-hJ)\left( A-\left( I-hJ\right)
^{-1}h\right) .
\]

First we have the pointwise estimate%

\begin{align*}
&  [hJ\eta](x) = \int h(x)\phi(T(x),T(y))\eta(y)d\mu(y)\\
&  \leq\left\Vert \int h(x)\phi(T(x),T(y))d\mu(x)\right\Vert _{L^{2}}%
^{1/2}\left\Vert \eta\right\Vert _{L^{2}}^{1/2}\\
&  \leq\left[ \max_{x,y\in X \times Y }h(x)|\phi(x,y)|\right] ^{1/2}<\left\Vert
\eta\right\Vert _{L^{2}}^{1/2}%
\end{align*}

using (\ref{eq:condition for invert}).  Integrating this quantity over $\mu$
yields that
\[
\left\Vert hJ\right\Vert < 1
\]
as an operator on $\mathcal{{B}}$. It then makes sense to talk about $\left(
I-hJ \right) ^{-1}$ . Thus
\[
Ker(L)=Ker\left( A-\left( I-hJ\right) ^{-1}h\right) .
\]
Now suppose for purposes of contradiction, that we have nontrivial $\eta\in
Ker(L)$. Then
\[
A\eta=\left( I-hJ\right) ^{-1}h\eta
\]
thus
\[
\left\langle \left( I-hJ\right) ^{-1}h\eta,\eta\right\rangle =\left\langle
A\eta,\eta\right\rangle =-\int\left\vert \nabla\eta\right\vert ^{2}d\mu<0.
\]
But as $\left( I-hJ\right) $ is invertible we can let
\[
\left( I-hJ\right) \omega=h\eta
\]
that is
\[
\left\langle \omega,h^{-1}\left( I-hJ\right) \omega\right\rangle =\left\langle
\left( I-hJ\right) ^{-1}h\eta,\eta\right\rangle <0
\]
that is
\[
0>\left\langle \omega,\frac{1}{h}\omega\right\rangle -\left\langle
\omega,J\omega\right\rangle \geq\frac{1}{\max h}\left\Vert \omega\right\Vert
^{2}-\left\Vert J\right\Vert \left\Vert \omega\right\Vert ^{2}=\left( \frac
{1}{\max h}-\left\Vert J\right\Vert \right) \left\Vert \omega\right\Vert ^{2}
\]
which is clearly a contradiction if $1>\max h\left\Vert J\right\Vert .$
\end{proof}

\section{Estimates on the sphere}

From here out we specialize to the round unit sphere, with cost function half of
distance squared. Note that this sphere has Riemannian volume $n\omega_n.$  

\subsection*{Oscillation estimates}

The following estimates are a version of \cite[Lemma 3.7]{BC}. On a compact
manifold, the cost function will be bounded. Since the solution $u$ is
$c$-convex, at the maximum point $x_{max}$ of $u$, $u$ is supported below by
cost support function $c(x,T(x_{0}))+\lambda.$ Hence, at the minimum point
$x_{min}$ \ we have that $u(x_{min})\geq$$c(x_{min},T(x_{max}))+\lambda$,
which in turn tells us that
\[
\text{{osc}}u\leq\text{{osc}}\,c=\frac{\pi^{2}}{2}.
\]
Next we observe that, because integration of the density $\nu$ against $m$
gives a probability measure, the density $\nu$ must be larger than $1/n\omega_n$ at some
point $y_{0}.$ It follows that, at the point $x{}_{0}=T^{-1}(y_{0})$ using
(\ref{eq:determines nu})
\[
-c(x_{0},y_{0})-u(x_{0})-\int\phi(y_{0},T(z))d\mu(z)-V(y_{0})\geq f(\frac{1}{n\omega_n})
\]
and similarly at the point where the density $\nu$ is smallest, $x_{1}$
\[
-c(x_{1},y_{1})-u(x_{1})-\int\phi(y_{1},T(z))d\mu(z)-V(y_{1})=f(\nu(x_{1}))
\]
Hence,
\begin{multline*}
-c(x_{0},y_{0})+c(x_{1},y_{1})-u(x_{0})+u(x_{1})-\int\left( \phi
(y_{0},T(z))+\phi(y_{1},T(z))\right) d\mu(z)-V(y_{0})+V(y_{1})\\
\geq f(\frac{1}{n\omega_n})-f(\nu(x_{1}))
\end{multline*}
that is
\[
f(\nu(x_{1}))\geq f(\frac{1}{n\omega_n})-2\text{{osc}}\,c-2\text{{osc}}\,\phi-\text{{osc}%
}\,V>-\infty.
\]
By Inada's condition,
\[
\nu\geq f^{-1}\left( f(\frac{1}{n\omega_n})-\pi^{2}-2\text{{osc}}\,\phi-\text{{osc}}\,V\right)
>0.
\]
Similarly, an upper bound can be derived
\[
\nu\leq f^{-1}\left( f(\frac{1}{n\omega_n})+\pi^{2}+2\text{{osc}}\,\phi+\text{{osc}}\,V\right)
<\infty.
\]

\subsection{Stayaway}

Now that $\nu$ is under control, it follows from the stayaway estimates of
Delano\"e and Loeper \cite{MR2232207} that the map $T(x)$ must satisfy
\[
dist_{\mathbb{S}^{n}}(x,T(x))\leq\pi-\epsilon(f,\mu,V,\phi)
\]
In particular the map stays clear of the cut locus. All derivatives of the
cost function are now controlled.

\subsection*{MTW estimates}

\begin{lem}
\label{lem:nubounded}If the map $T$ is differentiable and locally invertible,
then the target measure density
\[
\nu(T(x))=f^{-1}\left( -c(x,T(x))-u(x)-\int\phi(T(x),T(z))d\mu
(z)-V(T(x))\right)
\]
has first derivatives bounded by a universal constant and has second
derivatives bounded as
\[
\nu_{sr}=C_{1}+C_{2k}\left( T^{-1}\right) _{r}^{k}
\]
where the constants are within a controlled range.
\end{lem}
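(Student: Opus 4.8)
The plan is to differentiate the defining relation for $\nu(T(x))$ directly, treating everything as a function of the source variable $x$ and then converting source derivatives into target derivatives via the inverse map $T^{-1}$. Write $\psi(x) = -c(x,T(x)) - u(x) - \int \phi(T(x),T(z))\,d\mu(z) - V(T(x))$, so that $\nu(T(x)) = f^{-1}(\psi(x))$. Since the stayaway estimate of Delano\"e--Loeper keeps $T(x)$ away from the cut locus, all derivatives of $c(x,\cdot)$, $\phi$, $V$ evaluated along the map are bounded by universal constants; moreover $f^{-1}$ and $(f^{-1})'$ are controlled on the (now compact) range of $\psi$ established by the oscillation estimates, using the Inada conditions \eqref{eq:inada}--\eqref{eq:inada2}. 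The only quantities not yet shown to be universally bounded are the derivatives of $u$ and of $T$; the first-derivative computation should avoid them, and the second-derivative computation should isolate their contribution into the single term displayed in the statement.

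First I would compute $\partial_i [\nu(T(x))]$. Using \eqref{nine}, $T_i^s = b^{sk} w_{ki}$, but the key cancellation is the one already exploited in the linearization: the twist identity \eqref{eq:twist identity} gives $-u_i(x) - c_i(x,T(x)) = 0$, so in $\partial_i \psi$ the terms involving $u_i$ and the ``source slot'' derivative of $c$ cancel, leaving only terms of the form (bounded tensor)$\times T_i^s$, i.e. $-c_s b^{sk} w_{ki} - (\int \phi_s\, d\mu) b^{sk} w_{ki} - V_s b^{sk} w_{ki}$, plus the integral term $\int \phi_{\bar s}(T(x),T(z)) b^{sk}(z) \nu_k(z)\, d\mu(z)$ coming from differentiating through the $z$-integral (this last piece is where the derivative of $T$ in the $z$ variable enters, but it is integrated against $d\mu$ and contributes a bounded average of $\nu_k$). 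The point is that $b^{sk} w_{ki} = T_i^s$, and composing with $(T^{-1})$ this is what converts a source derivative into a target derivative; after multiplying by $(f^{-1})'(\psi)$ and rearranging, $\nu_s$ (target derivative) solves a linear relation of the form $\nu_s = (\text{bounded}) + (\text{bounded})\int (\text{bounded})\,\nu \,d\mu$, which by the invertibility already in hand (the same Neumann-series argument as in Lemma \ref{lem:invertible L}, since $\max h|\phi| < 1$ controls the integral operator norm) yields a universal bound on $\|\nu_s\|$, hence pointwise since the relation is pointwise once the integral is a fixed number. Alternatively, one observes directly: the integral term $\int \phi_{\bar s} b^{sk}\nu_k\, d\mu$ is bounded by $(\max|\nabla\phi|)(\max|b|)\|\nabla\nu\|_{L^\infty}\cdot\mu(X)$, and absorbing it using the smallness condition closes the bound.

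Next, differentiating once more, $\partial_j \partial_i[\nu(T(x))]$ produces: (i) terms with one more derivative of the bounded tensors $c_s, V_s, \phi_s, b$ contracted with first derivatives of $T$ — bounded; (ii) terms with $(f^{-1})''(\psi)$ times products of the already-bounded first derivatives — bounded; (iii) the genuinely new term, where one of the two derivatives lands on $T_i^s = b^{sk}w_{ki}$, producing $w_{kij}$-type expressions, i.e. second derivatives of $u$. Converting back to target second derivatives $\nu_{sr}$ via the chain rule introduces the factor $(T^{-1})_r^k$ exactly as displayed, $\nu_{sr} = C_1 + C_{2k}(T^{-1})_r^k$, where $C_1$ collects all the bounded contributions and $C_{2k}$ collects the coefficient multiplying the second-derivative-of-$u$ term. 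The main obstacle is purely bookkeeping: keeping track of which terms the twist identity \eqref{eq:twist identity} and its first derivative \eqref{nine} kill, and verifying that every surviving coefficient is one of the universally controlled quantities (derivatives of $c,\phi,V$ off the cut locus, $f^{-1}$ and its derivatives on the compact range, and $b^{is}$ which is controlled by A2 plus the stayaway estimate) — there is no analytic difficulty beyond the already-established invertibility of $I - hJ$, which handles the nonlocal feedback from the $z$-integral in the first-derivative step.
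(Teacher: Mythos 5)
Your first-derivative step contains a genuine error that then propagates: you claim that differentiating $\int\phi(T(x),T(z))\,d\mu(z)$ in $x$ produces an extra term $\int\phi_{\bar s}(T(x),T(z))\,b^{sk}(z)\nu_k(z)\,d\mu(z)$ ``from differentiating through the $z$-integral.'' It does not. Here $T$ is the fixed map of the given solution and $z$ is a dummy integration variable, so the only $x$-dependence of the integral is through the first slot $T(x)$; the term you add belongs to the \emph{linearization} computation (where $u$, hence $T(z)$ for every $z$, is varied), not to this lemma. Because of this spurious term you are forced to invoke the smallness condition $\max h|\phi|<1$ and a Neumann-series absorption, which are not hypotheses of the lemma and are not needed: after the twist cancellation $-u_i-c_i=0$, every surviving term carries a factor $T^s_i$, so invertibility of $DT$ alone gives $\nu_s=(f^{-1})'\{-c_s-\int\phi_s\,d\mu-V_s\}$, which is bounded outright. (Even internally, your absorption would require smallness of $|\nabla\phi|\,|b|$, which the cited condition on $h|\phi|$ does not provide.)

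The second-derivative bookkeeping is also misidentified. You say the new term comes from a derivative landing on $T^s_i=b^{sk}w_{ki}$, ``producing $w_{kij}$-type expressions, i.e.\ second derivatives of $u$.'' Derivatives of $w$ are \emph{third} derivatives of $u$, and if such terms really sat inside $C_{2k}$ the lemma's assertion that the constants are controlled would fail (and the later MTW estimate, which needs $(\ln\nu)_{sr}T^r_1T^s_1$ to be of $K$-type, would not go through). In the correct computation one differentiates the already-simplified identity $\nu_s(T(x))=(f^{-1})'\{-c_s-\int\phi_s\,d\mu-V_s\}$ once more in $x_k$: no derivative ever falls on $T^s_i$, and the factor $\left(T^{-1}\right)^k_r$ arises solely from the mixed source--target second derivative $c_{sk}(x,T(x))$, the one term in the bracket's derivative that does not come with its own factor of $T^p_k$; all coefficients, including $C_{2k}\sim(f^{-1})'(-c_{sk})$, are then controlled by the stayaway and oscillation estimates. (If one insists on differentiating the composition twice directly, as you propose, the $T^s_{ij}$-terms on the two sides cancel exactly by the first-derivative identity --- a cancellation your sketch does not identify and without which the argument does not close.)
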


\begin{proof}
Differentiate in the $x_{k}$ direction
\[
\nu_{s}T_{k}^{s}(x)=
\]
\[
\left( f^{-1}\right) \prime\left\{  -c_{k}(x,T(x))-c_{s}(x,T(x))T_{k}%
^{s}-u_{k}-T_{k}^{s}\int\phi_{s}(T(x),T(z))d\mu(z)-V_{s}T_{k}^{s}\right\}
\]

\[
=\left( f^{-1}\right) \prime T_{k}^{s}(x)\left\{  -c_{s}(x,T(x))-\int\phi
_{s}(T(x),T(z))d\mu(z)-V_{s}(T(x))\right\}  .
\]
As this is true for all $k$ and $DT$ is invertible, we can conclude that
\[
\nu_{s}(T(x))=\left( f^{-1}\right) \prime\left\{  -c_{s}(x,T(x))-\int\phi
_{s}(T(x),T(z))d\mu(z)-V_{s}(T(x))\right\}  ,
\]
which is a bounded quantity. For second derivatives, differentiate this
equation in $x$ again
\begin{multline*}
\nu_{sp}T_{k}^{p}=\\
\left( f^{-1}\right) \prime\prime T_{k}^{p}(x)\times\\
\left\{  -c_{s}(x,T(x))-\int\phi_{s}(T(x),T(z))d\mu(z)-V_{s}(T(x))\right\} \\
\times\left\{  -c_{p}(x,T(x))-\int\phi_{p}(T(x),T(z))d\mu(z)-V_{p}%
(T(x))\right\} \\
+\left( f^{-1}\right) \prime\left\{  -c_{sk}(x,T(x))-c_{sp}(x,T(x))T_{k}%
^{p}(x) -T_{k}^{p}(x)\int\phi_{ps}(T(x),T(z))d\mu(z)-T_{k}^{p}(x)V_{sp}%
(T(x))\right\} \\
\end{multline*}
that is
\begin{multline*}
\nu_{sr}=\left( f^{-1}\right) \prime\prime\times\\
\left\{  -c_{s}(x,T(x))-\int\phi_{s}(T(x),T(z))d\mu(z)-V_{s}(T(x))\right\} \\
\times\left\{  -c_{p}(x,T(x))-\int\phi_{p}(T(x),T(z))d\mu(z)-V_{p}%
(T(x))\right\} \\
+\left( f^{-1}\right) \prime\left\{  -c_{sk}(x,T(x))\left( T^{-1}\right)
_{r}^{k}-c_{sp}(x,T(x))-\int\phi_{ps}(T(x),T(z))d\mu(z)-V_{sp}(T(x))\right\}
.
\end{multline*}
Now all the terms, with the exception of the $\left( T^{-1}\right) _{r}^{k}$
term, are in given by controlled constants, independent of $u.$ We are done.
\end{proof}

Before we state the main a priori estimate, we recall the Ma-Trudinger-Wang
tensor \cite[pg. 154]{MR2188047}. For each $y$ in the target, one can define
Ma-Trudinger-Wang (MTW) tensor as a $\left( 2,2\right) $ tensor on $T_{x}M$ via%

\[
\mathsf{MTW}_{ij}^{kl}(x,y)=\left\{  \left(  -c_{ijpr}+c_{ijs}c^{sm}%
c_{mrp}\right)  c^{pk}c^{rl}\right\}  (x,y).
\]
It is by now a well known fact that, on the sphere
\[
\mathsf{MTW}_{ij}^{kl}\xi_{k}\xi_{l}\tau^{i}\tau^{j}\geq\delta_{n}\left\Vert
\xi\right\Vert ^{2}\left\Vert \tau\right\Vert ^{2}%
\]
for a positive $\delta_{n}$ and all vector-covector pairs such that
\[
\xi(\tau)=0.
\]
(For a more discussion of the geometry of this tensor, see also
\cite{MR2654086}.)

\bigskip Given a solution, we define an operator on $(2,0)$
tensors as follows. \  Let $h$ be a $(2,0)$ tensor. \ Given vector fields
$X_{1},X_{2}$, we define
\[
\left(  L_{w}h\right)  (X_{1},X_{2})=\frac{1}{\sqrt{\det w}}\nabla_{j}\left(
\sqrt{\det w}w^{ij}\nabla_{i}h\right)  -w^{ij}\nabla_{j}a\nabla_{i}%
h(X_{1},X_{2})
\]
where
\[
-a(x)=\frac{1}{2}\ln\det w(x)-\ln\det b(x)+\ln\nu(x,T(x))
\]
and covariant differention is taken with respect to the round metric.

\begin{prop}
\bigskip Let $u$ be a solution of (\ref{the equation}). \ If $e$ is a unit direction in a
local chart on $S^{n}$ then \
\begin{align*}
L_{w}w(e,e) &  \geq w^{ij}(-c_{ijpr}+c_{ijs}c_{krp}c^{sk})c^{pm}c^{rl}%
w_{me}w_{le}\\
&  -C\left(  1+\sum w^{ii}\sum w_{jj}+\sum w^{ii}+\sum w_{ii}^{2}\right)
\end{align*}

\end{prop}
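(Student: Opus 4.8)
The plan is to differentiate the equation \eqref{lnequation} twice in a fixed direction $e$, organize the resulting terms by their order in $D^2 u = w$, and apply the Ma-Trudinger-Wang convexity to capture the ``good'' third-order term. Concretely, I would first compute $L_w w(e,e)$ by commuting the two $e$-derivatives through the operator $L_w$: since $L_w$ is (up to the drift $\nabla a$) the Laplace operator of the metric $g = w$, differentiating $F \equiv 0$ twice produces the leading term $w^{ij} w_{ij,ee}$, plus the linearization applied to first derivatives, plus the second variation. The standard optimal-transport computation (following Ma-Trudinger-Wang \cite{MR2188047}, Loeper \cite{MR2506751}, and Trudinger-Wang) reorganizes $w^{ij} D_e^2(c_{ij}(x,T(x)))$ using the twist identity \eqref{nine}, $T_i^s = b^{sk} w_{ki}$, so that the genuinely third-order terms in $T$ either cancel or combine into the MTW expression $w^{ij}(-c_{ijpr}+c_{ijs}c_{krp}c^{sk})c^{pm}c^{rl} w_{me} w_{le}$. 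This is the term that is retained with a favorable sign on the right-hand side; everything else must be absorbed into the error $C(1 + \sum w^{ii}\sum w_{jj} + \sum w^{ii} + \sum w_{ii}^2)$.

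Next I would account for the new features of \emph{this} equation compared to the usual optimal transport PDE: the zeroth-order term $\ln f^{-1}(Q(x,u))$ and the nonlocal integral $\int \phi(T(x),T(z))\,d\mu(z)$ hidden inside $Q$. Here the key input is Lemma \ref{lem:nubounded}: differentiating $\ln \nu(T(x))$ once yields a bounded quantity, and differentiating twice yields $\nu_{sr} = C_1 + C_{2k}(T^{-1})_r^k$ with controlled constants. Converting $(T^{-1})_r^k$ back through \eqref{nine} shows $(T^{-1})$ is controlled by $b$ and $w^{-1}$, so the second derivatives of the zeroth-order term contribute at worst a term like $C\,b^{sk} w^{\cdot\cdot}$ — bounded by $C\sum w^{ii}$, which fits inside the error. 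The stayaway estimate of Delano\"e-Loeper \cite{MR2232207} (invoked in Section 4) is essential here: it keeps $T(x)$ away from the cut locus so that all derivatives of $c(x,T(x))$, $\phi(T(x),T(z))$, and $V(T(x))$ are a priori bounded, and it guarantees $b = -c_{is}(x,T(x))$ is uniformly positive definite so that $b^{sk}$ and $c^{pk}$ are bounded. The oscillation estimates bound $\nu$ above and below, hence $(f^{-1})'$, $(f^{-1})''$, and $a$ are under control.

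Then I would handle the drift and curvature terms. The metric $g = w$ is not a fixed metric, so commuting $\nabla_e \nabla_e$ past $w^{ij}\nabla_i\nabla_j$ produces Christoffel-symbol terms and a Ricci-type term of the round sphere acting on $w(e,e)$; since the background sphere has bounded geometry these produce terms quadratic in $w$ and its first derivatives, i.e. controlled by $C(\sum w^{ii}\sum w_{jj} + \sum w_{ii}^2 + 1)$ after using $|\nabla w| \lesssim w + $ lower order (more precisely, the mixed first-derivative terms $w^{ij} w^{kl} \partial w\,\partial w$ are the source of the $\sum w^{ii}\sum w_{jj}$ term). The drift $w^{ij}\nabla_j a \nabla_i h$ contributes, via $|\nabla a| \le |\nabla \ln\det w| + |\nabla \ln\det b| + |\nabla\ln\nu|$, a term like $C \sum w^{ii} \cdot |\nabla_e w|$, which by Cauchy-Schwarz is bounded by the stated error.

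\textbf{The main obstacle} I anticipate is not any single estimate but the bookkeeping: correctly tracking which third-order terms $D^3 u$ cancel against which, and verifying that the only surviving uncontrolled ingredient is precisely the MTW quadratic form — that is, reproducing the Ma-Trudinger-Wang/Loeper cancellation faithfully in the presence of the extra terms, and confirming that the nonlocal integral term, after differentiation, never produces anything worse than $C\sum w^{ii}$ (it cannot, because $\phi$ is smooth and the only $u$-dependence comes through $T$, whose derivatives are $b^{sk} w_{ki}$, linear in $w$). Once organized, each individual bound is routine given the stayaway estimate and Lemma \ref{lem:nubounded}; the care is entirely in the algebra of collecting terms into the claimed right-hand side.
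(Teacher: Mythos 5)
Your proposal follows essentially the same route as the paper: it is the Ma--Trudinger--Wang second-derivative computation for $L_w w(e,e)$ in the metric $g=w$, with the only genuinely new ingredients being exactly the ones you identify --- Lemma \ref{lem:nubounded} to show that the differentiated density term $\left(\ln\nu\right)_{sr}T_e^{r}T_e^{s}$ (including the nonlocal $\phi$-integral, whose $u$-dependence enters only through $T_i^s=b^{sk}w_{ki}$) is absorbed into the error $K$, and the Delano\"e--Loeper stayaway estimate to control all derivatives of the cost and keep $b$ uniformly invertible. The only minor imprecision is that these density contributions are quadratic rather than linear in $w$ (bounded by $C\sum w_{ii}^{2}$ rather than $C\sum w^{ii}$), but since both appear in the stated error term this does not affect the argument.
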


\begin{proof}
This was proven by Ma Trudinger and Wang in \cite{MR2188047}, \ in the case where
densities are known ahead of time. \ \ Adapting their proof requires only a small modification somewhere in the middle, but for completeness (and
mostly for fun), we will present the calculation. \ 

First, we note that
\begin{align*}
\left(  \frac{\partial_{j}\left(  \sqrt{\det w}w^{ij}\right)  }{\sqrt{\det
w_{ij}}}-w^{ij}a_{j}\right) = 
 \partial_{j}w^{ij}+\frac{1}{2}w^{ij}\left(
\ln\det w\right)  _{j}+  w^{ij}\frac{1}{2}\left(  \ln\det w\right)  _{j}%
-w^{ij}\left(  \ln\det b\right)  _{j}+w^{ij}\left(  \ln\nu\right)  _{s}%
T_{j}^{s} \\
 =-w^{ia}w^{bj}\partial_{j}w_{ab}+w^{ij}\left(  \ln\det w\right)
_{j}-w^{ij}\left(  b^{sk}b_{skj}+b^{sk}b_{skt}T_{j}^{t}\right)  +b^{si}\left(
\ln\nu\right)  _{s}   \\  
 =-w^{ia}w^{bj}(\partial_{j}w_{ab}-\partial_{a}w_{bj})-w^{ia}w^{bj}%
\partial_{a}w_{bj}+w^{ij}\left(  \ln\det w\right)  _{j}-w^{ij}b^{sk}%
b_{skj}-b^{ti}b^{sk}b_{skt}+b^{si}\left(  \ln\nu\right)  _{s}%
\end{align*}%
\begin{align}
&  =-w^{ia}w^{bj}(c_{abs}T_{j}^{s}-c_{bjs}T_{a}^{s})-w^{ij}b^{sk}%
b_{skj}-b^{ti}b^{sk}b_{skt}+b^{si}\left(  \ln\nu\right)  _{s} \label{pg10} \\
&  =b^{si}w^{bj}c_{bjs}-b^{ti}b^{sk}b_{skt}+b^{si}\left(  \ln\nu\right)  _{s}%
\end{align}
using (among others)\ the relations
\begin{align*}
\left(  \partial_{j}w_{ab}-\partial_{a}w_{bj}\right)    & =c_{abs}T_{j}%
^{s}-c_{bjs}T_{a}^{s}\\
w^{bj}T_{j}^{s}  & =b^{sj}.
\end{align*}
Now%
\begin{align*}
L_{w}w(e_{1},e_{1}) &  =\frac{1}{\sqrt{\det w}}\nabla_{j}\left(  \sqrt{\det
w}w^{ij}\nabla_{i}w\right)  (e_{1},e_{1})-w^{ij}\nabla_{j}a\nabla_{i}%
w(e_{1},e_{1})\\
&  =w^{ij}\nabla_{j}\nabla_{i}w(e_{1},e_{1})+\left(  b^{si}w^{bj}%
c_{bjs}-b^{ti}b^{sk}b_{skt}+b^{si}\left(  \ln\nu\right)  _{s}\right)
\nabla_{i}w(e_{1},e_{1})\\
&  =w^{ij}\left(
\begin{array}
[c]{c}%
\partial_{i}\partial_{j}w(e_{1},e_{1})-\nabla_{j}\partial_{i}w(e_{1}%
,e_{1})+2w(\nabla_{\nabla_{j}\partial_{i}}e_{1},e_{1})\\
2\partial_{i}w(\nabla_{j}e_{1},e_{1})-2\partial_{j}w(\nabla_{i}e_{1},e_{1})\\
+2w(\nabla_{j}\nabla_{i}e_{1},e_{1})+2w(\nabla_{i}e_{1},\nabla_{j}e_{1})
\end{array}
\right)  \\
&  +\left(  b^{si}w^{bj}c_{bjs}-b^{ti}b^{sk}b_{skt}+b^{si}\left(  \ln
\nu\right)  _{s}\right)  \left(  \partial_{i}w(e_{1},e_{1})-2w(\nabla_{i}%
e_{1},e_{1})\right)  .
\end{align*}
At this point, we choose a normal coordinate system (in the round metric), and
we have
\begin{align*}
L_{w}w(e_{1},e_{1}) &  =\left(  b^{si}w^{bj}c_{bjs}-b^{ti}b^{sk}b_{skt}%
+b^{si}\left(  \ln\nu\right)  _{s}\right)  \partial_{i}w(e_{1},e_{1}%
)+w^{ij}\left(  \partial_{i}\partial_{j}w(e_{1},e_{1})+2w(\nabla_{j}\nabla
_{i}e_{1},e_{1})\right)  \\
&  =\left(  b^{is}w^{bj}c_{bjs}-b^{it}b^{sk}b_{skt}+b^{is}\left(  \ln
\nu\right)  _{s}\right)  \partial_{i}w_{11}\\
&  +w^{ij}\left(  \partial_{i}\partial_{j}w_{11}-\partial_{1}\partial
_{1}w_{ij}\right)  +w^{ij}\left(  \partial_{1}\partial_{1}w_{ij}+2w(\nabla
_{j}\nabla_{i}e_{1},e_{1})\right)
\end{align*}

Again harking back to \cite{MR2188047}, we let
\[
K=C\sum w^{ii}\sum w_{jj}+C\sum w^{ii}+C\sum w_{ii}^{2}+C
\]
and note that terms of the following form are $K$
\begin{align*}
K &  =w^{ij}T_{b}^{s}\\
K &  =\left(  \partial_{j}w_{ik}-\partial_{k}w_{ij}\right)  \\
K &  =w^{ij}2w(\nabla_{j}\nabla_{i}e_{1},e_{1})\\
K &  =w^{ij}w_{kl}%
\end{align*}
so that
\begin{align*}
L_{w}w(e_{1},e_{1}) &  =-K+\left(  b^{si}w^{bj}c_{bjs}-b^{ti}b^{sk}%
b_{skt}+b^{si}\left(  \ln\nu\right)  _{s}\right)  \partial_{i}w_{11}\\
&  +w^{ij}\left(  \partial_{i}\partial_{j}w_{11}-\partial_{1}\partial
_{1}w_{ij}\right)  +w^{ij}\partial_{1}\partial_{1}w_{ij}.
\end{align*}
Now differentiating
\begin{equation}
\ln\det w_{ij}=\ln\det b_{is}+\ln\mu-\ln\nu
\end{equation}
we have
\begin{equation}
w^{ij}\partial_{1}w_{ij}=b^{si}\left(  b_{is1}+b_{ist}T_{1}^{t}\right)
+\left(  \ln\mu\right)  _{1}-\left(  \ln\nu\right)  _{s}T_{1}^{s}%
\label{detder}%
\end{equation}
and again%
\[
w^{ij}\partial_{11}w_{ij}+\partial_{1}w^{ij}\partial_{1}w_{ij}=K+b^{si}%
b_{ist}T_{11}^{t}+\left(  \ln\nu\right)  _{sr}T_{1}^{r}T_{1}^{s}-\left(
\ln\nu\right)  _{s}T_{11}^{s}.
\]
Now recall Lemma 3,
\begin{align*}
\left(  \ln\nu\right)  _{sr}T_{1}^{r}T_{1}^{s} &  =\frac{C_{1sr}%
+C_{2sk}\left(  T^{-1}\right)  _{r}^{k}}{\nu}T_{1}^{r}T_{1}^{s}-\left(  \ln
\nu\right)  _{s}\left(  \ln\nu\right)  _{r}T_{1}^{r}T_{1}^{s}\\
&  =K
\end{align*}
thus%
\begin{equation}
w^{ij}\partial_{11}w_{ij}=w^{ia}w^{bj}\partial_{1}w_{ab}\partial_{1}%
w_{ij}+K+b^{si}b_{ist}T_{11}^{t}-\left(  \ln\nu\right)  _{s}T_{11}%
^{s}.\label{concavity}%
\end{equation}
Note that differentiating
\[
T_{i}^{s}=b^{sk}w_{ki}%
\]
yields
\begin{equation}
T_{ij}^{s}=b^{sk}\partial_{j}w_{ki}-b^{sa}b^{pk}w_{ki}\left(  b_{apj}%
+b_{apq}T_{j}^{q}\right)  \label{Tsij}%
\end{equation}
in particular
\[
T_{11}^{s}=b^{sk}\partial_{1}w_{k1}-b^{sa}b^{pk}w_{k1}\left(  b_{ap1}%
+b_{apq}T_{1}^{q}\right)  .
\]
Now it follows that
\begin{align}
&  T_{11}^{s}-b^{sk}\partial_{k}w_{11}\nonumber\\
&  =b^{sk}\left(  \partial_{1}w_{k1}-\partial_{k}w_{11}\right)  -b^{sa}%
b^{pk}w_{k1}\left(  b_{ap1}+b_{apq}T_{1}^{q}\right)  \label{comm2}\\
&  =K.\label{comm3}%
\end{align}
Bringing in concavity of the Monge-Amp\`ere (\ref{concavity}) and (\ref{comm3}%
)\ we can eliminate some terms to see
\begin{align*}
L_{w}w(e_{1},e_{1}) &  \geq-K+b^{is}w^{bj}c_{bjs}\partial_{i}w_{11}\\
&  +w^{ij}\left(  \partial_{i}\partial_{j}w_{11}-\partial_{1}\partial
_{1}w_{ij}\right)  .
\end{align*}
Then using
\begin{align*}
\partial_{1}\partial_{1}w_{ij} &  =u_{ij11}+c_{ij11}+2c_{ijs1}T_{1}%
^{s}+c_{ijs}T_{11}^{s}+c_{ijpr}T_{1}^{p}T_{1}^{r}\\
\partial_{i}\partial_{j}w_{11} &  =u_{11ij}+c_{11ij}+c_{11si}T_{j}%
^{s}+c_{11sj}T_{i}^{s}+c_{11s}T_{ij}^{s}+c_{11pr}T_{i}^{p}T_{j}^{r}%
\end{align*}
we have
\begin{align*}
L_{w}w(e_{1},e_{1}) &  \geq-K+\left(  b^{is}w^{bj}c_{bjs}\right)  \partial
_{i}w_{11}\\
&  +w^{ij}\left(  c_{11s}T_{ij}^{s}+c_{11pr}T_{i}^{p}T_{j}^{r}-c_{ijs}%
T_{11}^{s}-c_{ijpr}T_{1}^{p}T_{1}^{r}\right).
\end{align*}
\ From  (\ref{Tsij})
\begin{align*}
w^{ij}T_{ij}^{s} &  =w^{ij}\left(  b^{sk}\partial_{j}w_{ki}-b^{sa}b^{pk}%
w_{ki}\left(  b_{apj}+b_{apq}T_{j}^{q}\right)  \right)  \\
=w^{ij}b^{sk}\left(  \partial_{j}w_{ki}-\partial_{k}w_{ij}+\partial_{k}%
w_{ij}\right)   &  -b^{sa}b^{pj}\left(  b_{apj}+b_{apq}T_{j}^{q}\right)
\end{align*}

\begin{align*}
&  =K+b^{sk}\partial_{k}\left(  \ln\det w\right)  \\
&  =K
\end{align*}
by (\ref{detder}). \ \ Using   (\ref{comm2}) we conclude
\begin{align*}
L_{w}w(e_{1},e_{1}) &  \geq-K-w^{bj}c_{bjs}b^{sa}b^{pk}w_{k1}b_{apq}T_{1}%
^{q}\\
&  -w^{ij}c_{ijpr}T_{1}^{p}T_{1}^{r}.
\end{align*}
which is the desired result after reindexing. \ \

\end{proof}

\begin{cor}
Second derivatives of $u$ are uniformly bounded.
\end{cor}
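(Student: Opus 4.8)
The plan is to obtain a uniform upper bound on $\lambda_{\max}(w)$ from the inequality of the preceding Proposition by a maximum principle for the elliptic operator $L_w$; once that bound is in hand the corollary is immediate, because $u_{ij}=w_{ij}-c_{ij}(x,T(x))$ and all derivatives of $c$ along the graph of $T$ have already been controlled. First I would collect the standing bounds: after the stayaway estimate of Delano\"e--Loeper the map $T$ avoids the cut locus, so the cost derivatives are bounded by universal constants, $b_{is}$ is uniformly positive definite, and the oscillation estimates give $0<\nu_-\le\nu\le\nu_+<\infty$; since $\det w_{ij}=\det b_{is}\cdot\mu/\nu$ and $\mu$ is a fixed positive density, $0<d_0\le\det w\le D_0<\infty$. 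Two consequences of $\det w\le D_0$ will be used repeatedly: $\sum w^{ii}\ge nD_0^{-1/n}=:c_0>0$, and, because $\lambda_{\min}^{\,n-1}\lambda_{\max}\le\det w\le D_0$, the sharper bound $\sum w^{ii}\ge\lambda_{\min}^{-1}\ge(\lambda_{\max}/D_0)^{1/(n-1)}$. It is this last fact---that $\sum w^{ii}$ is forced to be large precisely when $\lambda_{\max}$ is large---that lets the argument close without introducing an auxiliary function.

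Assume $u\in C^4$, as in the standing hypothesis for the a priori estimates. By compactness of $S^n$, $M:=\sup_x\lambda_{\max}(w(x))$ is attained at a point $x_0$ and a unit direction $e$; I extend $e$ near $x_0$ to a vector field by parallel transport in the round metric along geodesics through $x_0$, so that $\nabla e(x_0)=0$, $|e|\equiv1$, and $x\mapsto w(e,e)(x)$ has a local maximum at $x_0$. At $x_0$ the gradient of $w(e,e)$ vanishes and its Hessian is nonpositive, so ellipticity of $(w^{ij})$ forces $L_w w(e,e)(x_0)\le0$, and the Proposition gives
\[
0\;\ge\;w^{ij}\,\mathsf{MTW}_{ij}^{kl}(x_0,T(x_0))\,w_{ke}w_{le}\;-\;C\Bigl(1+\textstyle\sum w^{ii}\sum w_{jj}+\sum w^{ii}+\sum w_{ii}^{2}\Bigr).
\]
To bound the first term below I diagonalize $w$ with respect to the round metric at $x_0$: with $e=v_1$ the top eigendirection, $w^{ij}=\sum_a\lambda_a^{-1}v_a^iv_a^j$ and $w_{ke}=M(v_1)_k$. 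For each $a\ne1$ the covector $Mv_1$ and the vector $v_a$ satisfy the orthogonality $(Mv_1)(v_a)=0$, so the uniform MTW positivity on the sphere makes the $a$-th summand at least $\delta_nM^2\lambda_a^{-1}$, while the single $a=1$ summand is at least $-CM$ since the $c$-derivatives are controlled. Using $\sum_{a\ne1}\lambda_a^{-1}\ge\frac{n-1}{n}\sum w^{ii}$ (the smallest $\lambda_a^{-1}$ being $\lambda_1^{-1}$), the MTW term is $\ge\frac{(n-1)\delta_n}{n}M^2\sum w^{ii}-CM$.

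It remains to absorb the error into $\frac{(n-1)\delta_n}{n}M^2\sum w^{ii}$, under the assumption that $M$ exceeds a constant determined by $n,\delta_n,c_0,D_0$ and $C$. The contributions $CM$, $C$ and $C\sum w^{ii}$ are absorbed using $\sum w^{ii}\ge c_0$ and $M\ge1$; the contribution $C\sum w^{ii}\sum w_{jj}\le CnM\sum w^{ii}$ is absorbed because $M$ is large; and $C\sum w_{ii}^2\le CnM^2$ is absorbed using $M^2\sum w^{ii}\ge D_0^{-1/(n-1)}M^{2+1/(n-1)}$, which is strictly super-quadratic in $M$. This produces $0>0$, a contradiction, so $M\le C^\ast$ for a universal $C^\ast$. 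Together with $\det w\ge d_0$ this confines every eigenvalue of $w$ to a fixed interval $[c',C^\ast]$ with $c'>0$, and since $u_{ij}=w_{ij}-c_{ij}(x,T(x))$ with the cost term bounded, $D^2u$ is uniformly bounded.

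The step that needs genuine care rather than bookkeeping is the lower bound on the MTW term: one must arrange the orthogonality hypothesis of the sphere's MTW inequality (which dictates the diagonalization and the separate, merely bounded, handling of the $a=1$ eigendirection), and then verify that the resulting super-quadratic gain $M^{2+1/(n-1)}$ genuinely dominates the quadratic error $\sum w_{ii}^2$---for which the determinant pinching $\det w\le D_0$ is indispensable.
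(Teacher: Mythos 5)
Your proposal is correct and is essentially the argument the paper intends: the paper's proof is just a citation to the standard Ma--Trudinger--Wang maximum-principle estimate (with \cite{Kim03102011} for the Riemannian details), and your write-up carries out exactly that standard scheme --- maximum point of $w(e,e)$, diagonalization so that the MTW orthogonality $\xi(\tau)=0$ applies to the eigendirections $a\neq 1$, crude handling of the $a=1$ term, and absorption of the error $K$ using the determinant pinching $d_0\le\det w\le D_0$ coming from the equation together with the oscillation and stayaway bounds. The frame-extension correction terms at the maximum point are of the same type as $K$ and only change the constant, so your use of $L_w w(e,e)(x_0)\le 0$ is legitimate.
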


\begin{proof}
Given the maximum principle estimate, this proof is standard following
\cite{MR2188047}. For some more details in the setting of Riemannian manifolds see \cite[Theorem 3.5]{Kim03102011}.
\end{proof}

\section{Main theorem}

In order to make a precise statement, we define
\begin{align*}
\nu_{lower}  &  =f^{-1}\left( f(\frac{1}{n\omega_n})-2\text{{osc}}c-2\left\Vert \phi\right\Vert
_{\infty}-\text{{osc}}V\right) \\
\nu_{upper}  &  =f^{-1}\left( f(\frac{1}{n\omega_n})+2\text{{osc}}c+2\left\Vert \phi\right\Vert
_{\infty}+\text{{osc}}V\right).
\end{align*}
Similarly, an upper bound can be defined
\[
h_{\max}=\sup_{Q\in\lbrack\nu_{lower},\nu_{upper}]}\frac{\left( f^{-1}%
(Q)\right) ^{\prime}}{f^{-1}\left( Q\right) }.
\]

\begin{thm}
\label{thm:main} Suppose that $f$ satisfies the Inada-like conditions
(\ref{eq:inada}) (\ref{eq:inada2}), $\mu$ and $m$ are smooth, and $\phi$ and
$V$ are lipschitz. \ If
\begin{equation}
\max_{x,y\in M}|\phi(x,y)|<\frac{1}{h_{\max}}.
\end{equation}
then there exists a smooth solution to (\ref{lnequation}).
\end{thm}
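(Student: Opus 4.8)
The plan is to run the method of continuity on the family of equations
\[
F_t(x,u,Du,D^2u)=0,\qquad t\in[0,1],
\]
where $F_t$ interpolates between a trivially solvable problem at $t=0$ and the equation \eqref{lnequation} at $t=1$. A convenient choice is to scale the perturbative data, e.g. replace $\phi$ by $t\phi$ and $V$ by $tV$ (and, if needed, interpolate $\mu$ toward the volume form), keeping $f$ fixed throughout; at $t=0$ the constant function $u\equiv\lambda$ with $\lambda$ chosen so that $f^{-1}(-\lambda-c(x,x))$ integrates to a probability measure solves the equation, so the solution set $\mathcal S=\{t:\text{a }C^{4,\alpha}\text{ solution exists}\}$ is nonempty. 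The goal is to show $\mathcal S$ is both open and closed in $[0,1]$, whence $1\in\mathcal S$; once a $C^{4,\alpha}$ solution at $t=1$ is in hand, bootstrapping via Schauder theory applied to \eqref{the equation} (whose right-hand side is smooth in all its arguments given the stayaway estimate and smoothness of $\mu,m$) upgrades it to $C^\infty$.

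\emph{Closedness} is the part that is essentially already done. Along any sequence $t_k\to t_\infty$ with solutions $u_k$, the oscillation estimates of Section 5 give uniform $C^0$ bounds and two-sided bounds $0<\nu_{lower}\le\nu\le\nu_{upper}$ on the density; Delano\"e–Loeper stayaway then keeps the maps off the cut locus with a uniform margin, so all derivatives of $c(x,T(x))$ are controlled; Lemma \ref{lem:nubounded} controls the first derivatives of $\nu$ and bounds its second derivatives linearly by $D^2u$; and the MTW-type Proposition together with its Corollary then yields a uniform $C^2$ bound on $u_k$. Loeper/MTW regularity theory upgrades this to a uniform $C^{2,\alpha}$ bound, and differentiating \eqref{the equation} and applying linear Schauder estimates gives a uniform $C^{4,\alpha}$ bound. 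Arzel\`a–Ascoli extracts a $C^{4,\alpha'}$ limit solving the equation at $t_\infty$, so $\mathcal S$ is closed. (One must check the data scaling does not spoil the hypotheses: $\|t\phi\|_\infty\le\|\phi\|_\infty<1/h_{\max}$ for all $t\in[0,1]$, and $\mathrm{osc}\,(t\phi),\mathrm{osc}\,(tV)\le\mathrm{osc}\,\phi,\mathrm{osc}\,V$, so every estimate above is uniform in $t$.)

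\emph{Openness} is where the hypothesis on $\phi$ is used. At a solution $u_{t_0}$, the linearization of $F_{t_0}$ is exactly the operator $L$ of \eqref{definition of linearized operator at a solution}, which by the computation of Section 3 has the self-adjoint-up-to-a-weight form $L=(I-hJ)(A-(I-hJ)^{-1}h)$ on $L^2(X,d\mu)$. The condition $\max_{x,y}|\phi(x,y)|<1/h_{\max}$ forces $\|hJ\|<1$, so $I-hJ$ is invertible; Lemma \ref{lem:invertible L} then shows $L$ has trivial kernel, and since $L$ is a compact perturbation of the (negative) Bakry–Emery Laplacian $A$ acting between $C^{4,\alpha}$ and $C^{2,\alpha}$, it is a Fredholm operator of index zero, hence an isomorphism on these spaces (after quotienting by constants, or fixing the normalization by requiring $\int u\,d\mu$ to be the value dictated by the probability constraint). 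The implicit function theorem in these Banach spaces then produces $C^{4,\alpha}$ solutions for $t$ near $t_0$, so $\mathcal S$ is open.

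The main obstacle is making the openness step fully rigorous: one must verify that the nonlocal operator $L$ really is Fredholm of index zero on Hölder spaces (the integral term $hJA$ involves second derivatives under the integral, so it is not obviously a \emph{compact} perturbation of $A$ in the naive sense — one should rewrite it, as in Section 3, using the integration-by-parts identity so that $J$ acts after $\triangle_a$, i.e. as $h\,J\circ A$ with $J$ smoothing, making $hJA$ a bounded-but-not-a-priori-compact correction) and that the normalization constraint fixing $\nu$ as a probability density is compatible with the linearized problem, i.e. that the linearized probability constraint $\int L\eta\,d\mu=\int(\text{derivative of the constraint})$ is automatically satisfied so that $L$ maps into the correct codimension-one slice. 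Handling the constant/normalization bookkeeping carefully — as the paper already does in choosing $u^\ast=0$ — is what closes the gap.
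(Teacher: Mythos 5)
Your proposal is correct and follows essentially the same route as the paper: a continuity path scaling $\phi,V$ by $t$ and interpolating the measure, closedness from the oscillation/stayaway/MTW a priori estimates together with Evans--Krylov and Schauder theory, and openness from Lemma \ref{lem:invertible L} combined with the index-zero (Fredholm) property of the linearized operator, which the paper justifies by noting it shares its principal symbol with the Laplacian on the sphere. The ``obstacle'' you flag at the end is resolved exactly as you suggest and as the paper implicitly does: in the form (\ref{definition of linearized operator at a solution}) the nonlocal term acts through the bounded-kernel integral operator after $\triangle_{a}$, so it is a lower-order (compact) perturbation and the index-zero argument goes through.
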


For existence, we proceed by continuity \cite[Theorem 17.6]{GT} on the
equation (\ref{lnequation}), letting
\begin{multline}
F(t,x,u,Du,D^{2}u)=\\
\ln\det(\left( D^{2}u+D^{2}c(x,T(x))\right) -\ln\det(-D\bar{Dc(x,T(x)))}\\
-\ln(t\mu(x)+(1-t)m(x))+\ln f^{-1}\left( Q(t,x,T(x))\right) \label{continuity}%
\end{multline}
where
\[
Q(t,x,T(x))=
\]
\[
Q(t,x,T(x))=-u(x)-c(x,T(x))-t\int\phi(T(x),T(z))d\mu(z)-tV(T(x))\}.
\]
At time $t=0,~\ $a solution is given by $u\equiv0:$ \ This maps the measure
$m$ to itself via the identity mapping. \ The interval $\mathcal{I}$ of $t$
for which a solution exists is nonempty. \ Notice that the form of the
equation (\ref{continuity}) is the same form as (\ref{lnequation}) up to a
scale of the functions $\phi$ and $V$ and a change of measure, so the
estimates from the previous section all hold. From the theory of Krylov and
Evans one can obtain $C^{2,\alpha}$ estimates. Thus $\mathcal{I}$ is closed.
\ \ Lemma \ref{lem:invertible L} with these conditions gives openess, noting
that on the sphere, a Laplacian has index zero, and that the linearized
operator which has the same principle symbol has index zero as well. \ 

\begin{rem*}
For uniqueness, the standard PDE trick does not work immediately, even under
assumptions such as those in the theorem. One may be tempted by the standard
argument \cite[Theorem 17.1]{GT} to obtain a contradiction. However, the
intermediate linearized operator will have the additional $\nabla F$ term that
arises in (\ref{full linearized operator}) because combinations of $u$ and $v$
are not solutions. Our proof of invertibility fails for these, so we have no
reason to expect the proof would hold after being integrated. Uniqueness may be more easily obtained from geometric consideration as in \cite[section 4]{BC},
see also \cite[Chapters 15, 16]{MR2459454Villanibook}.

However, if the integral term is not present, we can use the argument
\cite[Theorem 17.1]{GT}, making the important note that on the sphere, the set
of c-convex function is convex \cite[Theorem 3.2]{FKMScreening}. In this case
invertibililty of the linearized operator follows easily from standard maximum
principle arguments.
\end{rem*}

\bibliographystyle{plain}
\bibliography{BCAug2012}

\end{document}